%
%
%
\pdfpagewidth=8.5truein
\pdfpageheight=11truein
%

\documentclass[11pt]{article}
\usepackage{amssymb,amsmath,amsthm}
\usepackage{enumerate}
\usepackage{pgf,tikz}
\usetikzlibrary{arrows}

\setlength{\topmargin}{ -1.5cm}
\setlength{\oddsidemargin}{ -0.5cm}
\textwidth 17cm
\textheight 22.4cm

\newtheorem{theorem}{Theorem}[section]
\newtheorem{corollary}[theorem]{Corollary}
\newtheorem{lemma}[theorem]{Lemma}
\newtheorem{proposition}[theorem]{Proposition}

\newtheorem{example}[theorem]{Example}
\numberwithin{equation}{section}

\newcommand{\Pf}{\operatorname{Pf}}

\newcommand{\wedgedots}{\wedge \cdots \wedge}

\newcommand{\Nnn}{\mathbb{N}}

\begin{document}

\author{Richard Ehrenborg and N. Bradley Fox}
\title{A sign-reversing involution for an extension of Torelli's Pfaffian identity}
\date{}

\maketitle

\begin{abstract}
We evaluate the hyperpfaffian
of a skew-symmetric 
$k$-ary polynomial $f$
of degree $k/2 \cdot (n-1)$.
The result is a product of the Vandermonde product and
a certain expression involving the coefficients of the polynomial $f$.
The proof utilizes a sign reversing involution on a set of weighted, oriented partitions.
When restricting to the classical case when $k=2$
and the polynomial is $(x_{j} - x_{i})^{n-1}$,
we obtain an identity due to Torelli.
\end{abstract}

\section{Introduction}

The Pfaffian of a skew-symmetric matrix
is commonly defined as the square root of the determinant.
Note that if the order of the matrix is odd, then the determinant
vanishes and the Pfaffian is zero.
Hence we assume that the order is even.
Similar to the determinant (of any square matrix) being expressed
as a sum over all perfect matchings of the complete bipartite
graph, the Pfaffian has an explicit expression as a sum over all
perfect matchings of the complete graph.

Barvinok~\cite{Barvinok} extended the notion of
the Pfaffian to the hyperpfaffian.
Instead of considering matchings of the complete graph,
consider set partitions of the set $[n] = \{1,2, \ldots, n\}$
into blocks of equal size $k$. 
Let $\Pi_{n,k}$ denote the set of such set partitions.
Furthermore, let $k$ be an even integer and $n$ a multiple of~$k$.
Let $f$ be a $k$-ary skewsymmetric function defined on
the set $[n]^{k}$.
For a $k$-element subset $B =\{b_{1} < b_{2} < \cdots < b_{k}\}$
of $[n]$ write $f(B) = f(b_{1}, b_{2}, \ldots, b_{k})$.
Lastly, define the sign $(-1)^{\tau}$
of a partition $\tau = \{B_{1}, B_{2}, \ldots, B_{n/k}\}$
in $\Pi_{n,k}$ to be the sign of the permutation
$b_{1,1}, b_{1,2}, \ldots, b_{1,k}, b_{2,1}, \ldots, b_{2,k},
b_{3,1}, \ldots, b_{n/k,k}$, where the $i$th block $B_{i}$ is given
by $B_{i} = \{b_{i,1} < b_{i,2} < \cdots < b_{i,k}\}$.
Then the hyperpfaffian is defined 
by the sum
\begin{equation}
     \Pf(f)
    =
      \sum_{\tau}
                  (-1)^{\tau}
                \cdot
                   \prod_{i=1}^{n/k} f(B_{i})    ,
\label{equation_Pfaffian}
\end{equation}
where the sum is over all partitions
$\tau=\{B_{1},B_{2},\ldots, B_{n/k}\}$
in $\Pi_{n,k}$;
see~\cite[Section~3]{Barvinok}.

In the case when the function is a skew-symmetric polynomial $f$
in $k$ variables of degree $k/2 \cdot (n-1)$,
we evaluate the hyperpfaffian; see
Theorem~\ref{main_theorem}.
The result is a product of the Vandermonde product
and an expression of the coefficients of the polynomial~$f$.
We prove this using a sign reversing involution
that cancels most of the terms, leaving only the terms corresponding
to the Vandermonde determinant.
The proof can be made completely combinatorial by combining
the last step
with Ira Gessel's sign reversing involution in
his proof of the Vandermonde identity~\cite{Gessel}.
In the classical Pfaffian case, that is, when $k=2$,
our identity yields a nice expression,
generalizing an identity due to Torelli~\cite{Torelli}.

In the last section we state some open questions
about the hyperpfaffian, among them what identities
does it satisfy.

\section{The hyperpfaffian in connection with the exterior algebra}

To give more motivation for the hyperpfaffian
we introduce the exterior algebra.
Recall that $f$ is a skew-symmetric function if
for all permutations $\sigma$ in ${\mathfrak S}_{k}$ we have that
$$ f(i_{\sigma(1)}, i_{\sigma(2)}, \ldots, i_{\sigma(k)})
      =
          (-1)^{\sigma}
      \cdot
           f(i_{1}, i_{2}, \ldots, i_{k}) , $$
where $(-1)^{\sigma}$ denotes the sign of the permutation $\sigma$.
Observe that if two of the entries $i_{1}, i_{2}, \ldots, i_{k}$
are equal, then the function value
$f(i_{1}, i_{2}, \ldots, i_{k})$ is equal to zero.

Let $\Lambda$ denote the exterior algebra
in the variables $t_{1}, t_{2}, \ldots, t_{n}$.
For $S = \{s_{1} < s_{2} < \cdots < s_{m}\}$
a subset of $[n]$, let $t_{S}$
denote the exterior product
$t_{S} = t_{s_{1}} \wedge t_{s_{2}} \wedgedots t_{s_{m}}$.
Observe that
for two sets $S$ and $T$ that share at least one element,
we have that $t_{S} \wedge t_{T} = 0$.
Also note that if at least one of the two sets $S$ and $T$
have even cardinality, then the elements $t_{S}$ and $t_{T}$
commute, that is,
$t_{S} \wedge t_{T} = t_{T} \wedge t_{S}$.
Furthermore, let $f(S)$ denote
the function value $f(s_{1}, s_{2}, \ldots, s_{k})$.

Luque and Thibon expressed the hyperpfaffian
in terms of the exterior algebra~\cite[Equation~(79)]{Luque_Thibon}.
We include a proof for completeness.
\begin{proposition}
The {\em hyperpfaffian} of the skew-symmetric function
$f$ defined on the set $[n]^{k}$ is the unique scalar
given by the equation
$$
\left(
\sum_{S}
    f(S) \cdot t_{S}
\right)^{n/k}
=
(n/k)!
\cdot
   \Pf(f) \cdot t_{[n]}  ,
$$
where the sum is over all $k$-element
subsets of the set $[n]$.
\label{proposition_Pfaffian}
\end{proposition}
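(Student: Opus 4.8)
The plan is to expand the left-hand side with the multinomial theorem and then identify which terms survive inside the exterior algebra. First I would write
$$
\left( \sum_{S} f(S) \cdot t_{S} \right)^{n/k}
=
\sum_{(S_{1}, \ldots, S_{n/k})}
   f(S_{1}) \cdots f(S_{n/k})
   \cdot
   t_{S_{1}} \wedge \cdots \wedge t_{S_{n/k}} ,
$$
where the sum ranges over all ordered $(n/k)$-tuples of $k$-element subsets of $[n]$. Since $t_{S} \wedge t_{T} = 0$ whenever $S$ and $T$ share an element, only tuples of pairwise disjoint subsets contribute; because each subset has size $k$ and $(n/k)\cdot k = n$, such a tuple is precisely an ordered set partition of $[n]$ into blocks of size~$k$.

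Next I would evaluate the exterior product for a fixed ordered set partition $(S_{1}, \ldots, S_{n/k})$. Writing each block in increasing order, $t_{S_{i}}$ is the wedge of the $t$-variables indexed by $S_{i}$ in increasing order, so $t_{S_{1}} \wedge \cdots \wedge t_{S_{n/k}}$ equals $t_{[n]}$ times the sign of the permutation that rearranges the concatenated sequence $s_{1,1}, \ldots, s_{1,k}, s_{2,1}, \ldots, s_{n/k,k}$ into the increasing sequence $1, 2, \ldots, n$. By definition this sign is exactly $(-1)^{\tau}$, where $\tau = \{S_{1}, \ldots, S_{n/k}\}$ is the underlying unordered set partition. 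Moreover $f(S_{i}) = f(B_{i})$ by the convention for evaluating $f$ on a subset, so each ordered set partition contributes $(-1)^{\tau} \cdot \prod_{i} f(B_{i}) \cdot t_{[n]}$.

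Finally I would collect ordered set partitions according to their underlying unordered partition $\tau$. Because $k$ is even, each $t_{S_{i}}$ is an even element of $\Lambda$, so the factors $t_{S_{i}}$ commute, as do the scalars $f(S_{i})$; hence all $(n/k)!$ orderings of the blocks of a given $\tau$ contribute the same term, and summing gives
$$
\left( \sum_{S} f(S) \cdot t_{S} \right)^{n/k}
=
(n/k)! \cdot \sum_{\tau} (-1)^{\tau} \prod_{i=1}^{n/k} f(B_{i}) \cdot t_{[n]}
=
(n/k)! \cdot \Pf(f) \cdot t_{[n]} .
$$
Uniqueness of the scalar is immediate since $t_{[n]}$ spans the one-dimensional top exterior power of $\Lambda$. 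The only genuine bookkeeping is the sign computation in the second step: one must check that the sign produced by sorting the concatenated within-block-increasing sequence agrees with the permutation sign used to define $(-1)^{\tau}$, and that no extra signs appear when the blocks are permuted — which is exactly where the hypothesis that $k$ is even is used.
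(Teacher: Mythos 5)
Your proof is correct and follows essentially the same route as the paper: expand the power, discard tuples with overlapping blocks, identify the resulting sign with $(-1)^{\tau}$, and account for the $(n/k)!$ orderings of the blocks (using that $k$ is even so the sign is independent of the block order). Your version just spells out the sign bookkeeping a bit more explicitly than the paper does.
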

\begin{proof}
Begin by noting that the sign of a partition
$\tau = \{B_{1}, B_{2}, \ldots, B_{n/k}\}$ is
the unique scalar $(-1)^{\tau}$ such that
$t_{B_{1}} \wedge t_{B_{2}} \wedgedots t_{B_{n/k}}
     =
        (-1)^{\tau}
     \cdot
        t_{[n]}$.
Now expand the power in the proposition to obtain that
\begin{eqnarray*}
\left(
\sum_{S}
    f(S) \cdot t_{S}
\right)^{n/k}
  & = &
\sum_{B_{1}}
\cdots
\sum_{B_{n/k}}
f(B_{1}) \cdots f(B_{n/k}) \cdot t_{B_{1}} \cdots t_{B_{n/k}} ,
\end{eqnarray*}
where each sum is over all $k$-element subsets of $[n]$.
Observe that the product in the exterior algebra is
zero if two of the sets have a common element.
Hence the sum reduces to a sum over all ordered
partitions of $[n]$. Ordered here refers to
the set of blocks having a linear order. But given
a partition in $\Pi_{n,k}$ there are $(n/k)!$ ways
to obtain an ordered partition. Hence the sum reduces
to  $(n/k)! \cdot t_{[n]}$ times
the right hand-side of equation~\eqref{equation_Pfaffian},
proving the result.
\end{proof}

\begin{lemma}
Let $f$ be a skew-symmetric function on the set $[n]^{k}$
and let $\sigma$ be a permutation on the set~$[n]$.
Then the function
$g(i_{1}, i_{2}, \ldots, i_{k})
=
 f(\sigma(i_{1}), \sigma(i_{2}), \ldots, \sigma(i_{k}))$
is skew-symmetric and
the two hyperpfaffians differ by the sign $(-1)^{\sigma}$,
that is,
$\Pf(g) = (-1)^{\sigma} \cdot \Pf(f)$.
\label{lemma_switch}
\end{lemma}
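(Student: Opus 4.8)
The plan is to dispatch the skew-symmetry of $g$ by a one-line computation and then establish the sign relation by transporting everything through the algebra automorphism of the exterior algebra $\Lambda$ that is induced by $\sigma$, using the exterior-algebra description of the hyperpfaffian from Proposition~\ref{proposition_Pfaffian}. For the skew-symmetry, fix $\pi \in {\mathfrak S}_{k}$, write $j_{m} = \sigma(i_{m})$, and compute
$$g(i_{\pi(1)}, \ldots, i_{\pi(k)}) = f(j_{\pi(1)}, \ldots, j_{\pi(k)}) = (-1)^{\pi} \cdot f(j_{1}, \ldots, j_{k}) = (-1)^{\pi} \cdot g(i_{1}, \ldots, i_{k}),$$
using the skew-symmetry of $f$.

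Next I would introduce the algebra homomorphism $\phi_{\sigma} \colon \Lambda \to \Lambda$ determined by $\phi_{\sigma}(t_{i}) = t_{\sigma(i)}$. It is well defined because permuting the generators respects the defining relations of $\Lambda$, and it is invertible with inverse $\phi_{\sigma^{-1}}$. The structural fact driving the argument is
$$\phi_{\sigma}(t_{[n]}) = t_{\sigma(1)} \wedgedots t_{\sigma(n)} = (-1)^{\sigma} \cdot t_{[n]},$$
which is exactly the characterization of the sign of a permutation inside $\Lambda$ recalled at the start of the proof of Proposition~\ref{proposition_Pfaffian}.

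The heart of the proof is the identity
$$\phi_{\sigma}\left( \sum_{S} f(S) \cdot t_{S} \right) = \sum_{S} g(S) \cdot t_{S},$$
both sums ranging over the $k$-element subsets of $[n]$; equivalently, applying $\phi_{\sigma^{-1}}$, one has $\phi_{\sigma}(\sum_{S} g(S) t_{S}) = \sum_{S} f(S) t_{S}$. To see the latter form, fix $S = \{s_{1} < \cdots < s_{k}\}$ and let $\rho \in {\mathfrak S}_{k}$ be the permutation that sorts the sequence $\sigma(s_{1}), \ldots, \sigma(s_{k})$ into increasing order. Then $g(S) = f(\sigma(s_{1}), \ldots, \sigma(s_{k})) = (-1)^{\rho} \cdot f(\sigma(S))$ by skew-symmetry of $f$, while $\phi_{\sigma}(t_{S}) = t_{\sigma(s_{1})} \wedgedots t_{\sigma(s_{k})} = (-1)^{\rho} \cdot t_{\sigma(S)}$ in $\Lambda$, so the two factors of $(-1)^{\rho}$ cancel and $\phi_{\sigma}(g(S) \cdot t_{S}) = f(\sigma(S)) \cdot t_{\sigma(S)}$. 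Summing over $S$ and using that $S \mapsto \sigma(S)$ permutes the $k$-subsets of $[n]$ gives the identity.

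Finally I would apply $\phi_{\sigma}$ to the equation $\left( \sum_{S} g(S) \cdot t_{S} \right)^{n/k} = (n/k)! \cdot \Pf(g) \cdot t_{[n]}$ of Proposition~\ref{proposition_Pfaffian}. Since $\phi_{\sigma}$ is an algebra homomorphism it commutes with the $(n/k)$-th power, so the left side becomes $\left( \sum_{S} f(S) \cdot t_{S} \right)^{n/k} = (n/k)! \cdot \Pf(f) \cdot t_{[n]}$, and the right side becomes $(n/k)! \cdot \Pf(g) \cdot (-1)^{\sigma} \cdot t_{[n]}$. Comparing coefficients of $t_{[n]}$ yields $(-1)^{\sigma} \cdot \Pf(g) = \Pf(f)$, hence $\Pf(g) = (-1)^{\sigma} \cdot \Pf(f)$. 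The only real bookkeeping, and the one place to be careful, is the sign cancellation in the central identity: one must check that the permutation $\rho$ sorting $\sigma(s_{1}), \ldots, \sigma(s_{k})$ contributes the same sign to $g(S)$ as to $\phi_{\sigma}(t_{S})$, so that it disappears; everything else is formal manipulation in $\Lambda$ together with the uniqueness clause of Proposition~\ref{proposition_Pfaffian}. Alternatively, one could argue directly from~\eqref{equation_Pfaffian} via the bijection $\tau \mapsto \sigma(\tau)$ on $\Pi_{n,k}$, but then the within-block reorderings have to be tracked by hand, which the exterior algebra handles automatically.
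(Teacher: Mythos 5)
Your proof is correct and rests on the same mechanism as the paper's: both transport the defining equation of Proposition~\ref{proposition_Pfaffian} through the relabeling $t_{i} \mapsto t_{\sigma(i)}$ of the exterior algebra, establish $\sum_{S} g(S) \cdot t_{\sigma(S)}\text{-type terms} = \sum_{S} f(S) \cdot t_{S}$, and compare coefficients of $t_{[n]}$. The only real difference is that the paper first reduces to adjacent transpositions $\sigma = (j,j+1)$, which turns the sign bookkeeping into a two-case check, whereas you treat an arbitrary $\sigma$ in one pass by cancelling the sorting permutation $\rho$ against itself in $g(S)$ and in $\phi_{\sigma}(t_{S})$; your version is marginally cleaner since it avoids the (unstated in the paper) multiplicativity argument behind ``it is enough to prove the identity for adjacent transpositions.''

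One cosmetic slip worth fixing: the first displayed form of your central identity, $\phi_{\sigma}\bigl(\sum_{S} f(S) \cdot t_{S}\bigr) = \sum_{S} g(S) \cdot t_{S}$, is not what you get by applying $\phi_{\sigma^{-1}}$ to the second form, and it is in fact false for general $\sigma$: the same computation you carry out shows $\phi_{\sigma}\bigl(\sum_{S} f(S) \cdot t_{S}\bigr) = \sum_{S} (f\circ\sigma^{-1})(S) \cdot t_{S}$, which differs from $\sum_{S} g(S) \cdot t_{S}$ unless $f\circ\sigma$ and $f\circ\sigma^{-1}$ agree (e.g.\ it fails for a $3$-cycle). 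This is harmless because the identity you actually prove and use, $\phi_{\sigma}\bigl(\sum_{S} g(S) \cdot t_{S}\bigr) = \sum_{S} f(S) \cdot t_{S}$, is the correct one; just delete or correct the ``equivalently'' sentence.
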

\begin{proof}
It is straightforward to observe that $g$ is
skew-symmetric.
It is enough to prove the identity for the adjacent
transposition $\sigma = (j,j+1)$.
Let $u_{i} = t_{\sigma(i)}$, that is, a reordering
of the basis of the exterior algebra.
We claim that
$g(\sigma(S)) \cdot u_{\sigma(S)} = f(S) \cdot t_{S}$.
If neither $j$ nor $j+1$ belong to the set~$S$, there is nothing
to prove. If only one of them belongs to $S$,
yet again, there is nothing to prove. Finally, if
both $j$ and $j+1$ belongs to $S$, we have
that
$g(\sigma(S)) = -f(S)$.
and
$u_{\sigma(S)} = -t_{S}$,
and the two signs cancel.
Hence the two sums
$\sum_{S} g(S) \cdot u_{S}$
and
$\sum_{S} f(S) \cdot t_{S}$
are equal.
Now the result follows from the definition
of the hyperpfaffian and that $u_{[n]} = -t_{[n]}$.
\end{proof}

For more information regarding the hyperpfaffian and its applications, see Redelmeier~\cite{Redelmeier}.

\section{Preliminaries}

A weak composition $\vec{r}$ of an integer $m$ is a vector
$(r_{1}, r_{2}, \ldots, r_{k})$
whose entries are non-negative integers
and their sum is $m$.
The entries are called parts.
For a composition $\vec{r}$ into $k$ parts
we let $x^{\vec{r}}$ denote the monomial
$x_{1}^{r_{1}} x_{2}^{r_{2}} \cdots x_{k}^{r_{k}}$.
Furthermore, let the symmetric group ${\mathfrak S}_{k}$
act on compositions into $k$ parts by reordering the parts.

Let $f(x_{1}, x_{2}, \ldots, x_{k})$ be a homogeneous polynomial of degree
$k/2 \cdot (n-1)$. The polynomial $f$
can be expressed as
$$
  f(x_{1}, x_{2}, \ldots, x_{k})
   =
      \sum_{\vec{r}}
            a_{\vec{r}}
          \cdot
             x^{\vec{r}} ,
$$
where the sum is over all weak compositions $\vec{r}$
of $k/2 \cdot (n-1)$ into $k$ parts.
Furthermore, assume that the polynomial $f$
is skew-symmetric. This implies that
the coefficients satisfy that
$a_{\sigma \circ \vec{r}}=(-1)^{\sigma} \cdot a_{\vec{r}}$.

Let $\Gamma_{n,k}$ denote the set of increasing weak compositions
of $k/2 \cdot (n-1)$ into $k$ distinct parts.
That is, the set $\Gamma_{n,k}$ is given by
$$   \Gamma_{n,k}
     =
       \left\{(r_{1}, r_{2}, \ldots, r_{k}) \in \Nnn^{k}
               \:\: : \:\:
          0 \leq r_{1} < r_{2} < \cdots < r_{k},
          \:
           \sum_{i=1}^{k} r_{i} = k/2 \cdot (n-1) \right\}  .  $$
Hence we can write the skew-symmetric polynomial
$f$ in the form
\begin{equation}
 f(x_{1}, x_{2}, \ldots, x_{k})
   =
      \sum_{\vec{r} \in \Gamma_{n,k}}
      \sum_{\sigma \in {\mathfrak S}_{k}}
             (-1)^{\sigma}
          \cdot
             a_{\vec{r}}
          \cdot
             x^{\sigma \circ \vec{r}}.
 \label{equation_signed_skew_form}
 \end{equation}

We define an oriented partition to be a partition where
each block is endowed with a linear order.
Let~$T_{n,k}$ denote the set of all
oriented partitions $\rho$ of the set $[n]$ where each block has
cardinality $k$.
That is, for an oriented partition
$\rho = \{C_{1}, C_{2}, \ldots, C_{n/k}\}$,
each block $C_{i}$ is a list
$C_{i} = (c_{i,1}, c_{i,2}, \ldots, c_{i,k})$.

Observe that
the number of oriented partitions is given by
$|T_{n,k}| = (k!)^{n/k} \cdot |\Pi_{n,k}| = n!/(n/k)!$.
This can be directly observed by taking a permutation
on $n$ elements and dividing into $n/k$ blocks of size~$k$.
Permuting the $n/k$ blocks, yields the same
oriented partition.
Also observe that since $k$ is even, all the $(n/k)!$
permutations yielding the same oriented partition
have the same sign. We define this sign to be the sign
of the oriented partition, denoted $(-1)^{\rho}$.
More explicitly, the sign of $\rho$
is given by the sign of the permutation

\begin{equation}
 \pi(\rho)  =
 c_{1,1}, \ldots, c_{1,k}, \:\: c_{2,1}, \ldots, c_{2,k}, \:\:
 c_{3,k}, \ldots, c_{n/k,k}
 \label{equation_permutation}
 \end{equation}

By removing the linear order on each block from
the oriented partition $\rho$, we obtain a partition $\tau$.
We note that the sign of the oriented partition $\rho$
and the sign of the partition $\tau$ are related by
\begin{equation}  (-1)^{\rho}
    =
       (-1)^{\tau}
     \cdot
       (-1)^{\sigma_{1}}
     \cdot
       (-1)^{\sigma_{2}}
     \cdots
       (-1)^{\sigma_{n/k}}   ,
\label{equation_perm_sign}
\end{equation}
where $\sigma_{i}$ is the permutation
on the set $\{c_{i,1}, c_{i,2}, \ldots, c_{i,k}\}$
that orders the $i$th block, that is,
$\sigma_{i}(c_{i,1}) < \sigma_{i}(c_{i,2}) < \cdots < \sigma_{i}(c_{i,k})$.

Let $R_{n,k}$ denote the collection of sets of
size $n/k$ of
compositions in $\Gamma_{n,k}$,
where all the parts of the compositions are distinct.
Let $\beta=\{\vec{r}_{1},\ldots, \vec{r}_{n/k}\}$ denote such a set in $R_{n,k}$.
Observe that the sum of all the entries
of the compositions is given by
$n/k \cdot k/2 \cdot (n-1)$ which is the sum
$0 + 1 + \cdots + (n-1)$. Hence we conclude that
the underlying parts of the compositions of $\beta$
are the integers $0$ through $n-1$.
Thus we view $\beta$ as
an oriented set partition
of the elements $\{0,\ldots , n-1\}$ into $n/k$ blocks of size~$k$
in which each block is a composition in $\Gamma_{n,k}$.
Define the sign of
$\beta=\{\vec{r}_{1},\ldots, \vec{r}_{n/k}\}\in R_{n,k}$ with
$\vec{r}_{i}=(r_{i,1}, \ldots, r_{i,k})$, denoted by $(-1)^{\beta}$,
to be the sign of the permutation
$$ \pi(\beta)
    =
    r_{1,1}, \ldots, r_{1,k}, \:\: r_{2,1}, \ldots, r_{2,k},
      \:\:  r_{3,k}, \ldots, r_{n/k,k},  $$
where $\pi(\beta)$ is a permutation of the elements $\{0,1,\ldots, n-1\}$.

\section{Main Theorem}

Using the skew-symmetric polynomial given in
equation~\eqref{equation_signed_skew_form},
we have the following identity.
\begin{theorem}
The hyperpfaffian $\Pf(f(x_S))$ of order n is the product of the Vandermonde
product with a signed sum of products of coefficients $a_{\vec{r}}$:
$$\Pf(f(x_S))_{S\in \binom{[n]}{k}}
		=\left(\sum_\beta
			(-1)^{\beta} \cdot \prod_{i=1}^{n/k} a_{\vec{r}_{i}}\right)
                     \cdot
			 \prod_{1 \leq i < j \leq n}
                   	(x_{j} - x_{i}), $$
where the sum ranges over all partitions
$\beta$ in $R_{n,k}$.
\label{main_theorem}
\end{theorem}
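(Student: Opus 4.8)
The plan is to start from the exterior-algebra form of the hyperpfaffian (Proposition~\ref{proposition_Pfaffian}), expand everything in sight, and then split the resulting signed sum of monomials into a \emph{generic} part that reassembles into the Vandermonde product and a \emph{degenerate} part that a sign-reversing involution annihilates. To set this up, substitute the skew form~\eqref{equation_signed_skew_form} of $f$ into $\sum_{S} f(x_S)\, t_S$. For each fixed $k$-subset $S$ the inner sum over ${\mathfrak S}_{k}$ can be re-indexed so that the permutation $\sigma$ simultaneously rearranges the variables $x_{s_{1}}, \ldots, x_{s_{k}}$ and the wedge factors $t_{s_{1}}, \ldots, t_{s_{k}}$; the two resulting copies of $(-1)^{\sigma}$ cancel, giving
$$\sum_{S} f(x_S)\, t_S \;=\; \sum_{\vec r \in \Gamma_{n,k}} a_{\vec r} \sum_{C} \Bigl(\, \prod_{j=1}^{k} x_{c_{j}}^{\,r_{j}} \Bigr) \cdot t_{C} ,$$
where $C = (c_{1}, \ldots, c_{k})$ runs over all ordered $k$-tuples of distinct elements of $[n]$ and $t_{C} = t_{c_{1}} \wedgedots t_{c_{k}}$. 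Raising this to the power $n/k$, discarding the vanishing terms in which two tuples share an element (so the surviving tuples partition $[n]$), and dividing by $(n/k)!$ as in Proposition~\ref{proposition_Pfaffian} yields
$$\Pf(f(x_S)) \cdot t_{[n]} \;=\; \sum_{T} \Bigl(\, \prod_{i=1}^{n/k} a_{\vec r_{i}} \Bigr) \Bigl(\, \prod_{i,j} x_{c_{i,j}}^{\,r_{i,j}} \Bigr) \cdot t_{C_{1}} \wedgedots t_{C_{n/k}} ,$$
the sum being over unordered collections $T$ of pairs $(C_{i}, \vec r_{i})$ with $\vec r_{i} \in \Gamma_{n,k}$ and the ordered blocks $C_{i}$ partitioning $[n]$; the wedge product is independent of the order in which the blocks are listed because each $t_{C_{i}}$ has even degree~$k$. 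Writing $t_{C_{1}} \wedgedots t_{C_{n/k}} = (-1)^{w(T)} \cdot t_{[n]}$ turns this into a polynomial identity.

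Next I would dispose of the degenerate collections, that is, those for which the $n$ parts $r_{i,j}$ are not all distinct. Since the parts within a single $\vec r_{i}$ are already distinct, a degenerate $T$ has some value $v$ occurring in at least two different blocks; choose $v$ minimal. Exactly the elements $c_{i,j}$ with $r_{i,j} = v$ carry the exponent $v$ in the monomial of $T$, there are at least two of them, and they lie in distinct blocks; let $\ell_{1} < \ell_{2}$ be the two smallest. Let $\Phi(T)$ be the collection obtained by interchanging $\ell_{1}$ and $\ell_{2}$ within their blocks, keeping their positions and hence every $\vec r_{i}$ unchanged. Then $\Phi(T)$ is again degenerate with the same product of coefficients and the same monomial (both $\ell_{1}$ and $\ell_{2}$ still occupy positions carrying exponent $v$), while the data $v$, the set of exponent-$v$ elements, and the pair $\ell_{1} < \ell_{2}$ are all preserved, so $\Phi$ is an involution, and $w(\Phi(T))$ differs from $w(T)$ by a transposition. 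Thus $\Phi$ is a fixed-point-free, sign-reversing involution, and the degenerate terms contribute $0$.

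Finally I would identify the generic contribution with the Vandermonde product. For a generic $T$ the $n$ parts $r_{i,j}$ are distinct non-negative integers summing to $\tfrac{n}{k} \cdot \tfrac{k}{2}(n-1) = \binom{n}{2}$, hence they are exactly $0, 1, \ldots, n-1$; so $\beta(T) := \{\vec r_{1}, \ldots, \vec r_{n/k}\}$ belongs to $R_{n,k}$ and $\pi_{T} \colon \ell \mapsto \bigl(\text{exponent of } x_{\ell}\bigr)$ is a bijection $[n] \to \{0, \ldots, n-1\}$; conversely a pair $(\beta, \pi)$ recovers $T$ uniquely, the block attached to $\vec r_{i}$ being $\pi^{-1}(\{r_{i,1}, \ldots, r_{i,k}\})$ ordered so that its $j$-th entry has exponent $r_{i,j}$. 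Under this bijection the monomial of $T$ equals $\prod_{\ell} x_{\ell}^{\pi(\ell)}$, and a direct computation with the sign relation~\eqref{equation_perm_sign} shows $(-1)^{w(T)} = (-1)^{\beta} \cdot \operatorname{sgn}(\pi)$, where $\operatorname{sgn}(\pi)$ is the sign of $\pi$ viewed as a permutation of $[n] \cong \{0, \ldots, n-1\}$. Summing over generic $T$ then gives
$$\sum_{\beta \in R_{n,k}} (-1)^{\beta} \prod_{i=1}^{n/k} a_{\vec r_{i}} \cdot \sum_{\pi} \operatorname{sgn}(\pi) \prod_{\ell=1}^{n} x_{\ell}^{\pi(\ell)} \;=\; \Bigl(\, \sum_{\beta} (-1)^{\beta} \prod_{i=1}^{n/k} a_{\vec r_{i}} \Bigr) \cdot \det\bigl( x_{i}^{\,j-1} \bigr)_{1 \le i, j \le n} ,$$
and $\det(x_{i}^{\,j-1}) = \prod_{1 \le i < j \le n}(x_{j} - x_{i})$. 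Combining this with the cancellation of the degenerate terms proves the theorem, and replacing the final appeal to the Vandermonde determinant formula by Gessel's sign-reversing involution~\cite{Gessel} makes the whole argument bijective.

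I expect the main obstacle to be the sign bookkeeping rather than the combinatorics: one must check carefully that the re-indexing in the first step really does cancel the two copies of $(-1)^{\sigma}$, and, above all, verify the identity $(-1)^{w(T)} = (-1)^{\beta} \cdot \operatorname{sgn}(\pi)$ relating the wedge sign of a generic collection to the signs of $\beta$ and of the exponent bijection. By contrast, the involution $\Phi$ — its well-definedness, its being an involution, and its fixed-point-freeness — is routine once the minimality choices are pinned down as above.
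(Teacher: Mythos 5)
Your proposal is correct and follows essentially the same route as the paper: expand the hyperpfaffian over weighted oriented partitions (Lemma~\ref{lemma_expanded_pfaffian}), kill the terms with repeated exponents by a sign-reversing swap of two equal-weight elements in distinct blocks (Lemma~\ref{lemma_sign_reversing}), and factor the sign of each surviving term as $(-1)^{\beta}$ times the sign of the exponent permutation so the sum splits into the coefficient sum times the Vandermonde determinant (Lemma~\ref{lemma_sign}). The only cosmetic differences are that you enter through the exterior-algebra formulation of Proposition~\ref{proposition_Pfaffian} rather than the defining sum over partitions, and your involution picks the minimal repeated exponent value before choosing the pair to swap rather than the lexicographically least equal-weight pair; both canonical choices yield a valid involution.
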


\begin{example}
{\rm
When $n=12$ and $k=4$ there are
there are $32$ oriented partitions in $R_{12,4}$.
The coefficient in Theorem~\ref{main_theorem}
is in this case given by
$$ \begin{array}{l}
\:\:\:\:
   a_{0,1,10,11} a_{2,3,8,9} a_{4,5,6,7}
+ a_{0, 1, 10, 11} a_{2, 4, 7, 9} a_{3, 5, 6, 8}
+ a_{0, 1, 10, 11} a_{2, 5, 6, 9} a_{3, 4, 7, 8}
+ a_{0, 1, 10, 11} a_{2, 5, 7, 8} a_{3, 4, 6, 9} \\
+ a_{0, 2, 9, 11} a_{1, 3, 8, 10} a_{4, 5, 6, 7}
+ a_{0, 2, 9, 11} a_{1, 4, 7, 10} a_{3, 5, 6, 8}
+ a_{0, 2, 9, 11} a_{1, 5, 6, 10} a_{3, 4, 7, 8}
- a_{0, 2, 9, 11} a_{1, 6, 7, 8} a_{3, 4, 5, 10} \\
+ a_{0, 3, 8, 11} a_{1, 2, 9, 10} a_{4, 5, 6, 7}
+ a_{0, 3, 8, 11} a_{1, 4, 7, 10} a_{2, 5, 6, 9}
+ a_{0, 3, 8, 11} a_{1, 5, 6, 10} a_{2, 4, 7, 9}
+ a_{0, 3, 8, 11} a_{1, 5, 7, 9} a_{2, 4, 6, 10} \\
+ a_{0, 3, 9, 10} a_{1, 2, 8, 11} a_{4, 5, 6, 7}
- a_{0, 3, 9, 10} a_{1, 4, 6, 11} a_{2, 5, 7, 8}
- a_{0, 3, 9, 10} a_{1, 6, 7, 8} a_{2, 4, 5, 11}
+ a_{0, 4, 7, 11} a_{1, 2, 9, 10} a_{3, 5, 6, 8} \\
+ a_{0, 4, 7, 11} a_{1, 3, 8, 10} a_{2, 5, 6, 9}
+ a_{0, 4, 7, 11} a_{1, 5, 6, 10} a_{2, 3, 8, 9}
+ a_{0, 4, 8, 10} a_{1, 3, 7, 11} a_{2, 5, 6, 9}
+ a_{0, 4, 8, 10} a_{1, 5, 7, 9} a_{2, 3, 6, 11} \\
+ a_{0, 5, 6, 11} a_{1, 2, 9, 10} a_{3, 4, 7, 8}
+ a_{0, 5, 6, 11} a_{1, 3, 8, 10} a_{2, 4, 7, 9}
+ a_{0, 5, 6, 11} a_{1, 4, 7, 10} a_{2, 3, 8, 9}
+ a_{0, 5, 6, 11} a_{1, 4, 8, 9} a_{2, 3, 7, 10} \\
- a_{0, 5, 7, 10} a_{1, 2, 8, 11} a_{3, 4, 6, 9}
+ a_{0, 5, 7, 10} a_{1, 4, 6, 11} a_{2, 3, 8, 9}
+ a_{0, 5, 7, 10} a_{1, 4, 8, 9} a_{2, 3, 6, 11}
+ a_{0, 5, 8, 9} a_{1, 3, 7, 11} a_{2, 4, 6, 10} \\
+ a_{0, 5, 8, 9} a_{1, 4, 6, 11} a_{2, 3, 7, 10}
+ a_{0, 5, 8, 9} a_{1, 4, 7, 10} a_{2, 3, 6, 11}
- a_{0, 6, 7, 9} a_{1, 2, 8, 11} a_{3, 4, 5, 10}
- a_{0, 6, 7, 9} a_{1, 3, 8, 10} a_{2, 4, 5, 11} .
\end{array} $$
}
\end{example}

Let $W_{n,k}$ be the set of oriented partitions
$\rho=\{C_{1}, C_{2},\ldots ,C_{n/k}\}$ on the set $[n]$ with a composition
$\vec{w}_{i}=(w(c_{i,1}),\ldots, w(c_{i,k}))\in \Gamma_{n,k}$
assigned to each block $C_{i}=(c_{i,1},\ldots ,c_{i,k})$.
We define the following notions for such a weighted oriented partition $\rho$.
Let $(-1)^\rho$ be the {\em sign} of $\rho$, defined as in
the previous section by the sign of the permutation $\pi(\rho)$ from
equation~\eqref{equation_permutation}.
Let the {\em coefficient}~$c(\rho)$ denote the product
$\prod_{i=1}^{n/k}a_{\vec{w}_{i}}$ determined by the weight vectors of $\rho$.
Lastly, let $w(\rho)$ denote the {\em monomial}
$\prod_{i=1}^{n/k} x_{C_{i}}^{\vec{w}_{i}}$ where
$x_{C_{i}}^{\vec{w}_{i}}=\prod_{j=1}^{k} x_{c_{i,j}}^{w(c_{i,j})}$.

\begin{lemma}
The following expansion holds for the hyperpfaffian:
$$\Pf\left(f(x_S)\right)_{S\in \binom{[n]}{k}}
=
\sum_{\rho \in {W_{n,k}}} (-1)^{\rho} \cdot c(\rho) \cdot w(\rho)  . $$
\label{lemma_expanded_pfaffian}
\end{lemma}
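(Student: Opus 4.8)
The plan is to start from the defining sum~\eqref{equation_Pfaffian} for the hyperpfaffian, applied to the skew-symmetric function $g$ on $[n]^{k}$ given by $g(i_{1},\ldots,i_{k}) = f(x_{i_{1}},\ldots,x_{i_{k}})$; skew-symmetry of $g$ is inherited from that of the polynomial~$f$, and for $S=\{s_{1}<\cdots<s_{k}\}$ we have $g(S)=f(x_{S})$. Writing $B_{i}=\{b_{i,1}<\cdots<b_{i,k}\}$ for the blocks of a partition $\tau\in\Pi_{n,k}$, this gives
\[
\Pf(f(x_{S}))_{S\in\binom{[n]}{k}}
 = \sum_{\tau=\{B_{1},\ldots,B_{n/k}\}\in\Pi_{n,k}} (-1)^{\tau}\cdot \prod_{i=1}^{n/k} f(x_{b_{i,1}},\ldots,x_{b_{i,k}}).
\]
Into each factor $f(x_{b_{i,1}},\ldots,x_{b_{i,k}})$ I would substitute the expansion~\eqref{equation_signed_skew_form}: this rewrites the factor as a sum over pairs $(\vec r_{i},\sigma_{i})\in\Gamma_{n,k}\times{\mathfrak S}_{k}$, and substituting $x_{j}\mapsto x_{b_{i,j}}$ together with a reindexing turns the monomial $x^{\sigma_{i}\circ\vec r_{i}}$ into $\prod_{l=1}^{k} x_{b_{i,\sigma_{i}(l)}}^{r_{i,l}}$. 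Expanding the product over $i$ and interchanging the order of summation, $\Pf(f(x_{S}))$ becomes a sum over all ways of choosing $\tau\in\Pi_{n,k}$ together with a pair $(\vec r_{i},\sigma_{i})$ attached to each block $B_{i}$, with summand $(-1)^{\tau}\cdot\prod_{i}(-1)^{\sigma_{i}}\cdot\prod_{i} a_{\vec r_{i}}\cdot\prod_{i,l} x_{b_{i,\sigma_{i}(l)}}^{r_{i,l}}$.

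The next step is to recognize this index set as $W_{n,k}$. To the data $(\tau,\{(\vec r_{i},\sigma_{i})\})$ I associate the oriented partition $\rho$ whose $i$th block is the list $C_{i}=(b_{i,\sigma_{i}(1)},\ldots,b_{i,\sigma_{i}(k)})$ carrying the weight composition $\vec w_{i}:=\vec r_{i}$; since $\vec r_{i}\in\Gamma_{n,k}$ the weights increase along $C_{i}$, so $\rho\in W_{n,k}$. Conversely every $\rho\in W_{n,k}$ arises exactly once: recover $B_{i}$ by sorting $C_{i}$ by value, set $\vec r_{i}=\vec w_{i}$, and let $\sigma_{i}\in{\mathfrak S}_{k}$ send $l$ to the rank of $c_{i,l}$ inside $B_{i}$. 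Under this bijection the coefficient part of the summand is exactly $c(\rho)=\prod_{i} a_{\vec w_{i}}$ and the monomial part is exactly $w(\rho)=\prod_{i} x_{C_{i}}^{\vec w_{i}}$, straight from the definitions of $c(\rho)$ and $w(\rho)$. It then remains to identify the sign: we need $(-1)^{\rho}=(-1)^{\tau}\cdot\prod_{i}(-1)^{\sigma_{i}}$, and this is precisely equation~\eqref{equation_perm_sign}, once one notes that the sign of $\sigma_{i}\in{\mathfrak S}_{k}$ equals the sign of the permutation that sorts the list $C_{i}$ (both count the inversions of the sequence $(c_{i,1},\ldots,c_{i,k})$, since passing from values to ranks preserves all order relations). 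Assembling these three identifications yields $\Pf(f(x_{S}))=\sum_{\rho\in W_{n,k}}(-1)^{\rho}\cdot c(\rho)\cdot w(\rho)$.

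I expect the only genuine work to be bookkeeping rather than mathematical difficulty: fixing once and for all the convention for the ${\mathfrak S}_{k}$-action on compositions (hence for $x^{\sigma\circ\vec r}$), and checking that the permutation $\sigma_{i}$ produced above matches the one appearing in~\eqref{equation_perm_sign} up to taking an inverse, which does not affect the sign. One also has to confirm the harmless point that summing over unordered partitions $\tau$ with a pair attached to each block is literally the same as summing over $W_{n,k}$, which holds because reordering the $n/k$ blocks changes neither side. Alternatively, the same expansion can be carried out inside the exterior algebra, starting from Proposition~\ref{proposition_Pfaffian} and the formula $\left(\sum_{S} f(x_{S})\cdot t_{S}\right)^{n/k}=(n/k)!\cdot\Pf(f(x_{S}))\cdot t_{[n]}$.
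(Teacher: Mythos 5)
Your proposal is correct and follows essentially the same route as the paper: expand the defining sum for the hyperpfaffian using equation~\eqref{equation_signed_skew_form}, identify each term of the distributed product with a weighted oriented partition by listing each block in order of increasing weight, and match the signs via equation~\eqref{equation_perm_sign}, noting that $\sigma_{i}$ and the sorting permutation are inverses and hence have the same sign. Your write-up merely makes the bijection and the $\sigma_{i}$ versus $\sigma_{i}^{-1}$ bookkeeping more explicit than the paper does.
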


\begin{proof}

Expanding equation~\eqref{equation_Pfaffian} and applying
equation~\eqref{equation_signed_skew_form}, we have
$$ \Pf\left(f(x_S)\right)_{S \in \binom{[n]}{k}}
=
   \sum_{\tau \in \Pi_{n,k}}
                  (-1)^{\tau}
                \cdot
                   \prod_{i=1}^{n/k}  \left(\sum_{\vec{r} \in \Gamma_{n,k}}
      \sum_{\sigma \in {\mathfrak S}_{k}}
             (-1)^{\sigma}
          \cdot
             a_{\vec{r}}
          \cdot
             x_{B_{i}}^{\sigma \circ \vec{r}}\right).$$
Using the distributive law, expand the above product.  We obtain an oriented, weighted partition~$\rho$ for each term by orienting the elements in each block $B_{i}\in \tau$ by increasing size of the exponents of their associated variables.  The composition $\vec{r}$ corresponds to the choice of weight vector for each block, and the permutation $\sigma$ will undo the orientation of the block to properly assign the weights as exponents.  Multiplying the sign of $\sigma$ for each block with the sign of $\tau$ gives the sign of $\rho$ as described in equation~\eqref{equation_perm_sign} because for block $B_{i}$, $\sigma=\sigma_{i}^{-1}$.

\end{proof}

Let $W_{n,k}^{r}$ denote the subset of $W_{n,k}$ with repeated weights, and let $W_{n,k}^{d}$ denote the complement, that is, partitions with distinct weights.  We now create a sign-reversing involution $\phi$ for the set $W_{n,k}^{r}$ to narrow our focus to only partitions with distinct weights.  Given a partition $\rho$ in $W_{n,k}^{r}$, let $(i,j)$ be the lexicographically smallest pair of elements in $[n]$ in which $w(i)=w(j)$.  Define $\phi(\rho)$ by swapping $i$ and $j$, while leaving the weight vector for each block and the orientation unchanged.

\begin{lemma}
The function $\phi$ is a sign-reversing involution on the set $W_{n,k}^{r}$ which does not change the coefficient nor the monomial.  That is, for an oriented partition $\rho$ we have that $\phi^2(\rho)=\rho$, $c(\phi(\rho))=c(\rho)$, $w(\phi(\rho))=w(\rho)$, but $(-1)^{\phi(\rho)}=-(-1)^{\rho}$.
\label{lemma_sign_reversing}
\end{lemma}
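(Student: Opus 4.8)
The plan is to verify the four claimed properties directly from the construction of $\phi$. First I would check that $\phi$ is well-defined and an involution. Given $\rho\in W_{n,k}^r$, there exist at least two elements of $[n]$ with equal weight (weight here meaning the value $w(c)$ attached to $c$ via its position in its block's composition), so the lexicographically smallest such pair $(i,j)$ exists; since we only swap the \emph{labels} $i$ and $j$ in their respective blocks while keeping each block's weight composition and each block's orientation (i.e.\ which slot is which) fixed, the resulting object $\phi(\rho)$ is again a weighted oriented partition, and crucially $w(i)$ and $w(j)$ are unchanged by the swap, so $\phi(\rho)$ still lies in $W_{n,k}^r$ and the lexicographically smallest equal-weight pair of $\phi(\rho)$ is again $(i,j)$. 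Hence $\phi^2(\rho)=\rho$.

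Next I would check invariance of the coefficient and the monomial. The coefficient $c(\rho)=\prod_i a_{\vec w_i}$ depends only on the multiset of weight compositions assigned to the blocks, and $\phi$ changes none of these compositions, so $c(\phi(\rho))=c(\rho)$. For the monomial $w(\rho)=\prod_i x_{C_i}^{\vec w_i}$: swapping $i$ and $j$ moves the exponent that $i$ carried (namely $w(i)$) onto $j$ and vice versa, but since $w(i)=w(j)$ the product $x_i^{w(i)}x_j^{w(j)}$ over the two affected variables is literally the same monomial; all other factors are untouched. Hence $w(\phi(\rho))=w(\rho)$.

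Finally, the sign. The sign $(-1)^\rho$ is the sign of the permutation $\pi(\rho)$ obtained by reading off the entries of the blocks in order as in equation~\eqref{equation_permutation}. Swapping the two labels $i$ and $j$ (which occupy two distinct positions in the word $\pi(\rho)$, whether in the same block or different blocks) composes $\pi(\rho)$ with a transposition, hence flips the sign: $(-1)^{\phi(\rho)}=-(-1)^{\rho}$. The one point needing a word of care is that $i$ and $j$ really do sit in two \emph{distinct} positions of $\pi(\rho)$ with $i\ne j$ as values; this holds because $i,j\in[n]$ are distinct elements of a partition of $[n]$, so they appear exactly once each. The main (mild) obstacle is purely bookkeeping: making sure the notion of ``weight of an element'' is unambiguous — each element $c$ of $[n]$ lies in a unique block $C_i$ at a unique slot, so $w(c)$ is well-defined — and that the choice of the lexicographically smallest pair is genuinely canonical so that the involution property is automatic rather than requiring a separate argument.
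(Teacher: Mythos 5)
Your proposal is correct and follows the same route as the paper's proof, which simply asserts the involution, coefficient, and monomial claims as immediate from the definition and then observes that $\pi(\phi(\rho))=(i,j)\circ\pi(\rho)$ flips the sign. You have merely spelled out the details (in particular that $w(i)=w(j)$ forces the lexicographically smallest equal-weight pair, the coefficient, and the monomial all to be preserved) that the paper leaves implicit.
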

\begin{proof}
By definition, it follows that $\phi$ is an involution, and that it
leaves the coefficient and the monomial unchanged. 
To see that $\phi$ is sign-reversing, consider the consequences
of swapping $i$ and $j$ within the permutation $\pi(\rho)$.
We get that $\pi(\phi(\rho))=(i, j)\circ \pi(\rho)$,
hence the transposition changes the sign of
the corresponding permutation as $\phi$ is applied.
Thus, $(-1)^{\phi(\rho)}=-(-1)^{\rho}$.
\end{proof}

\begin{figure}
\begin{center}
\setlength{\unitlength}{1mm}
\begin{picture}(110,43)(0,0)

\thicklines

\multiput(0,6)(10,0){12}{\circle*{1.5}}

\put(-5,0){\makebox(10,5)[c]{$1$}}
\put(5,0){\makebox(10,5)[c]{$2$}}
\put(15,0){\makebox(10,5)[c]{$3$}}
\put(25,0){\makebox(10,5)[c]{$4$}}
\put(35,0){\makebox(10,5)[c]{$5$}}
\put(45,0){\makebox(10,5)[c]{$6$}}
\put(55,0){\makebox(10,5)[c]{$7$}}
\put(65,0){\makebox(10,5)[c]{$8$}}
\put(75,0){\makebox(10,5)[c]{$9$}}
\put(85,0){\makebox(10,5)[c]{$10$}}
\put(95,0){\makebox(10,5)[c]{$11$}}
\put(105,0){\makebox(10,5)[c]{$12$}}

\qbezier(80,6)(40,70)(0,6)
\qbezier(0,6)(5,14)(10,6)
\qbezier(10,6)(20,20)(29,7)
\put(29,7){\line(0,1){3}}
\put(29,7){\line(-1,0){3}}

\qbezier(40,6)(30,20)(20,6)
\qbezier(20,6)(45,50)(70,6)
\qbezier(70,6)(80,20)(89,7)
\put(89,7){\line(0,1){3}}
\put(89,7){\line(-1,0){3}}

\qbezier(100,6)(105,14)(110,6)
\qbezier(110,6)(85,50)(60,6)
\qbezier(60,6)(55,14)(51,7)
\put(51,7){\line(0,1){3}}
\put(51,7){\line(1,0){3}}

\put(30,38){\makebox(20,5)[c]{$(1,4,5,12)$}}
\put(35,28){\makebox(20,5)[c]{$(0,1,7,14)$}}
\put(75,28){\makebox(20,5)[c]{$(2,4,6,10)$}}

\end{picture}
\end{center}
\caption{
The oriented partition
$\{(9,1,2,4), (5,3,8,10), (11,12,7,6)\}$.
Note that it has a negative sign.
The weights yield the monomial
$x_{9}^{1} x_{1}^{4} x_{2}^{5} x_{4}^{12} \cdot
x_{5}^{0} x_{3}^{1} x_{8}^{7} x_{10}^{14} \cdot
x_{11}^{2} x_{12}^{4} x_{7}^{6} x_{6}^{10}$
and the coefficient
$a_{1,4,5,12} \cdot a_{0,1,7,14} \cdot a_{2,4,6,10}$.}
\label{figure_o_p}
\end{figure}
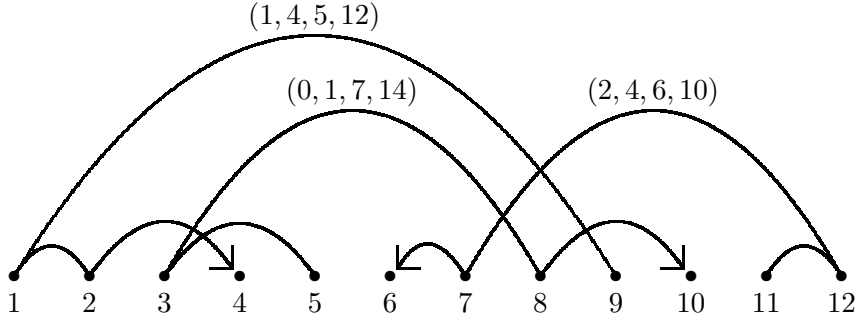

Observe that the weighted oriented partition in
Figure~\ref{figure_o_p} has non-distinct powers
of the pair of variables $x_{1}$ and $x_{12}$;
and the pair $x_{3}$ and $x_{9}$, and that the pair $(1,12)$
is lexicographic least. Hence this weighted oriented
partition cancels with the oriented partition
$\{(9,12,2,4), (5,3,8,10), (11,1,7,6)\}$.

We now concentrate on weighted oriented partitions where
the weights are distinct, that is, the set~$W_{n,k}^{d}$.
Note that this implies that
the weights are $0$ through $n-1$
and
allows us to narrow our focus to weight vectors that make up
an oriented partition in $R_{n,k}$.

For a weighted oriented partition $\rho$
in~$W_{n,k}^{d}$ let $\sigma$
be the unique permutation such that $w(\rho)=
x_{1}^{\sigma_{1}-1} x_{2}^{\sigma_{2}-1}\cdots x_{n}^{\sigma_{n}-1}$.
Furthermore, let
$\beta$ be the set of weight vectors
assigned to the blocks of $\rho$,
that is, $\beta$ lies in $R_{n,k}$.
Observe that this describes a bijection
between $W_{n,k}^{d}$
and the Cartesian product of
the symmetric group ${\mathfrak S}_{n}$
and the weight vectors $R_{n,k}$.

\begin{lemma}
The sign of a weighted oriented partition $\rho$ in $W_{n,k}^{d}$
factors as $(-1)^{\rho} = (-1)^{\beta} \cdot (-1)^{\sigma}$.
\label{lemma_sign}
\end{lemma}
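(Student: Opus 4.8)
The plan is to compute $(-1)^\rho$ directly from the defining permutation $\pi(\rho)$ in equation~\eqref{equation_permutation} and recognize it as a product of two permutations whose signs are exactly $(-1)^\beta$ and $(-1)^\sigma$. Recall that $\rho = \{C_1,\ldots,C_{n/k}\}$ with $C_i = (c_{i,1},\ldots,c_{i,k})$, and the $i$th block carries weight vector $\vec w_i = (w(c_{i,1}),\ldots,w(c_{i,k})) \in \Gamma_{n,k}$; since the weights are distinct they are a permutation of $0,1,\ldots,n-1$, so $\beta = \{\vec w_1,\ldots,\vec w_{n/k}\} \in R_{n,k}$. The key observation is that the weight function $w$ is itself a bijection $[n] \to \{0,1,\ldots,n-1\}$, and applying $w$ entrywise to the word $\pi(\rho) = c_{1,1},\ldots,c_{1,k},c_{2,1},\ldots,c_{n/k,k}$ produces exactly the word $\pi(\beta) = w(c_{1,1}),\ldots,w(c_{n/k,k})$ that defines $(-1)^\beta$.

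First I would set up the three permutations precisely. Regard $\pi(\rho)$ as a permutation of $[n]$ (written in one-line notation as the word above), $\pi(\beta)$ as a permutation of $\{0,\ldots,n-1\}$, and let $w$ denote the permutation sending position $c$ to $w(c)$. Then $\pi(\beta) = w \circ \pi(\rho)$ as functions (composing the word $\pi(\rho)$ with the relabeling $w$), so $(-1)^{\pi(\beta)} = (-1)^{w}\cdot(-1)^{\pi(\rho)}$, i.e. $(-1)^\beta = (-1)^w \cdot (-1)^\rho$. Next I would identify $(-1)^w$ with $(-1)^\sigma$: by the definition of $\sigma$ in the paragraph preceding the lemma, $w(\rho) = x_1^{\sigma_1 - 1}\cdots x_n^{\sigma_n-1}$, which says precisely that $w(i) = \sigma_i - 1$ for each $i \in [n]$; thus $w$ and $\sigma$ differ only by the fixed relabeling $m \mapsto m-1$ of the target, hence have the same sign, $(-1)^w = (-1)^\sigma$. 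Combining the two displayed identities gives $(-1)^\beta = (-1)^\sigma \cdot (-1)^\rho$, and since signs are $\pm 1$ this rearranges to $(-1)^\rho = (-1)^\beta \cdot (-1)^\sigma$, as claimed.

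The one point requiring care — and the step I expect to be the main obstacle — is the bookkeeping of \emph{which} symbol occupies \emph{which} position when I write $\pi(\beta) = w \circ \pi(\rho)$; one must be consistent about whether $\pi(\rho)$ is the one-line notation of a permutation of $[n]$ or a word indexing positions, and verify that "apply $w$ to every entry of the word $\pi(\rho)$" really is function composition in the intended order and not its inverse. Concretely, the entry in position $p$ of $\pi(\rho)$ is some $c_{i,j}$, and the entry in position $p$ of $\pi(\beta)$ is $w(c_{i,j})$ — this is immediate from the two definitions since the blocks and their internal orders are listed in the same sequence in both words — so the composition is exactly $\pi(\beta) = w \circ \pi(\rho)$ with no inversion, and multiplicativity of the sign under composition finishes it. A sanity check against Figure~\ref{figure_o_p}, where $(-1)^\rho = -1$, confirms the sign conventions are consistent.
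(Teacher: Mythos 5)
Your proof is correct and follows essentially the same route as the paper: the paper defines $\pi(\beta)'_i = \pi(\beta)_i + 1$ and observes $\pi(\beta)' = \sigma \circ \pi(\rho)$, which is exactly your factorization $\pi(\beta) = w \circ \pi(\rho)$ with $w = \sigma - 1$, just with the shift to $\{0,\ldots,n-1\}$ applied on the other side. Your write-up merely makes explicit the bookkeeping (consistent block ordering, invariance of sign under shifting values) that the paper leaves implicit.
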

\begin{proof}
Define the permutation $\pi(\beta)'$ on $[n]$ such that $\pi(\beta)'_i=\pi(\beta)_i+1$.  Since $(-1)^{\pi(\beta)'}=(-1)^{\pi(\beta)}$, it is enough to observe that the permutation $\pi(\beta)'$ factors as $\sigma \circ \pi(\rho)$.
\end{proof}

\begin{proof}[Proof of Theorem~\ref{main_theorem}.]
By combining
Lemmas~\ref{lemma_expanded_pfaffian}
through~\ref{lemma_sign}, we have that
$$\Pf\left(f(x_S)\right)_{S\in \binom{[n]}{k}}
=
\sum_{\rho \in {W_{n,k}^{d}}} (-1)^{\rho} \cdot c(\rho) \cdot w(\rho)
=
\left(\sum_\beta
			(-1)^{\beta} \cdot \prod_{i=1}^{n/k} a_{\vec{r}_{i}}\right)
\cdot \sum_{\sigma\in {\mathfrak S}_{n}}
       (-1)^{\sigma} \cdot
       x_{1}^{\sigma_{1} - 1} \cdots x_{n}^{\sigma_{n} - 1}, $$
where the last sum is the Vandermonde determinant,
which is equal to the Vandermonde product.
\end{proof}

An algebraic proof of Theorem~\ref{main_theorem}
 is as follows. Note that when setting two of the variables
$x_{i}$ and $x_{j}$ equal, the hyperpfaffian vanishes
by Lemma~\ref{lemma_switch}.
Hence as a polynomial in $x_{1}$ through~$x_{n}$,
the Vandermonde product divides the hyperpfaffian.
However, the two sides have same degree
$n/k \cdot k/2 \cdot (n-1) = \binom{n}{2}$, and hence
are equal up to a constant.
By considering the coefficient of the term
$x_{2} \cdots x_{n-1}^{n-2} x_{n}^{n-1}$,
we obtain the constant
$\sum_{\beta} (-1)^{\beta} \cdot \prod_{i=1}^{n/k} a_{\vec{r}_{i}}$.

Finally, observe that when the polynomial $f$ is replaced with
a polynomial of degree less than $k/2 \cdot (n-1)$ the
hyperpfaffian will be zero. This can be seen in two ways.
Either, the only polynomial of degree less than $\binom{n}{2}$
which is divisible by the Vandermonde product is the zero
polynomial. Alternatively, the sign reversing
involution has no fixed points; that is, it cancels all the terms.

\section{Application to the classical Pfaffian}

Let us now focus on the $k=2$ case.
In this case, the oriented partitions devolve into directed matchings,
and the compositions in $\Gamma_{n,2}$ have two parts with
the sum $n-1$, hence they have
the form $(i,n-1-i)$ from $i=0,1,\ldots, n/2-1$.
This leads the skew polynomial $f$ to have the following form,
in which we abbreviate the coefficients $a_{i,n-1-i}$ as simply $a_{i}$,
$$ f(x,y) = \sum_{i=0}^{n-1} a_{i} \cdot x^{i} y^{n-1-i}, $$
where $a_{n-1-i} = -a_{i}$.  Since the only
oriented partition in $R_{n,2}$ is
$\{(0,n-1),(1,n-2),\ldots, (n/2-1,n/2)\}$,
which has the sign $(-1)^{2\binom{n/2}{2}}=1$,
Theorem~\ref{main_theorem}
reduces to the following corollary.

\begin{corollary}
The Pfaffian $\Pf(f(x_{i},x_{j}))$ of order $n$
is the product of the first $n/2$ of the coefficients~$a_{i}$
times the Vandermonde product:
$$
\Pf\left(f(x_{i}, x_{j})\right)_{1 \leq i < j \leq n}
    =
\prod_{i=0}^{n/2-1} a_{i}
\cdot
        \prod_{1 \leq i < j \leq n}
                   (x_{j} - x_{i})    .    $$
\label{corollary}
\end{corollary}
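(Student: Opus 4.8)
The plan is to specialize Theorem~\ref{main_theorem} to $k=2$ and to make all the combinatorial data completely explicit. First I would describe $\Gamma_{n,2}$: a weak composition of $n-1$ into two distinct parts listed increasingly is precisely a pair $(i,n-1-i)$ with $0\le i\le n/2-1$. Next I would analyze $R_{n,2}$. An element $\beta$ is a set of $n/2$ such pairs with all parts distinct; as observed when $R_{n,k}$ was introduced, the multiset of all parts occurring in $\beta$ must be $\{0,1,\ldots,n-1\}$. Since the block containing $0$ is forced to be $(0,n-1)$, the block containing $1$ is then forced to be $(1,n-2)$, and so on, a greedy/inductive argument shows that $\beta=\{(0,n-1),(1,n-2),\ldots,(n/2-1,n/2)\}$ is the \emph{unique} element of $R_{n,2}$. (The rewriting of $f$ with $a_{n-1-i}=-a_{i}$ is part of the setup but plays no role in the proof itself.)

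The remaining work is to compute $(-1)^{\beta}$. Because $k=2$ is even, the sign is independent of the order in which the blocks are listed, so I would order them as above and examine the permutation $\pi(\beta)=0,n-1,1,n-2,\ldots,n/2-1,n/2$ of $\{0,1,\ldots,n-1\}$. Writing $m=n/2$, I would count inversions by separating the three types of pairs: the entries in odd positions read $0,1,\ldots,m-1$ and contribute none; the entries in even positions read $n-1,n-2,\ldots,m$ in decreasing order and contribute $\binom{m}{2}$; and a short case check (comparing an odd position $2i-1$ holding $i-1$ with an even position $2j$ holding $n-j$) shows that such a mixed pair is an inversion exactly when $i>j$, contributing another $\binom{m}{2}$. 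Hence $\pi(\beta)$ has $2\binom{n/2}{2}$ inversions, an even number, so $(-1)^{\beta}=1$; alternatively this follows by induction on $n$, inserting the pair $(n/2-1,n/2)$ in the two innermost positions.

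Finally I would substitute into Theorem~\ref{main_theorem}: with a single $\beta$, the coefficient is $(-1)^{\beta}\cdot\prod_{i=1}^{n/2}a_{\vec{r}_{i}}=1\cdot a_{0,n-1}a_{1,n-2}\cdots a_{n/2-1,n/2}$, which in the abbreviation $a_{i}:=a_{i,n-1-i}$ is $\prod_{i=0}^{n/2-1}a_{i}$, yielding the stated identity. Everything here is bookkeeping once Theorem~\ref{main_theorem} is available; the only step demanding any care — and thus the nearest thing to an obstacle — is verifying that the unique sign $(-1)^{\beta}$ equals $+1$.
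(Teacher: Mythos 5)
Your proposal is correct and follows the same route as the paper: specialize Theorem~\ref{main_theorem} to $k=2$, observe that $R_{n,2}$ consists of the single element $\{(0,n-1),(1,n-2),\ldots,(n/2-1,n/2)\}$, and check that its sign is $(-1)^{2\binom{n/2}{2}}=1$. The paper simply asserts the uniqueness of $\beta$ and the value of the sign, whereas you supply the greedy argument and the explicit inversion count, both of which are correct.
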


As a corollary we have the following identity due to
Torelli~\cite{Torelli},
see also~\cite[Equation~(4.6)]{Knuth}.
\begin{corollary}[Torelli]
When the skew-symmetric polynomial
is the function $f(x,y) = (y-x)^{n-1}$, the Pfaffian
is given by
$$
\Pf\left(f(x_{i} , x_{j})\right)_{1 \leq i < j \leq n}
    =
(-1)^{\binom{n/2}{2}}  
\cdot
\prod_{i=0}^{n/2 - 1} \binom{n-1}{i}
\cdot
        \prod_{1 \leq i < j \leq n}
                   (x_{j} - x_{i})    .    $$
\label{corollary_Torelli}
\end{corollary}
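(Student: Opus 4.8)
The plan is to deduce Corollary~\ref{corollary_Torelli} directly from Corollary~\ref{corollary}. The latter tells us that for any skew-symmetric $f(x,y) = \sum_{i=0}^{n-1} a_i x^i y^{n-1-i}$ with $a_{n-1-i} = -a_i$, the Pfaffian equals $\prod_{i=0}^{n/2-1} a_i$ times the Vandermonde product. So the entire task reduces to expanding $f(x,y) = (y-x)^{n-1}$ in the monomial basis, reading off the coefficients $a_i$, checking they satisfy the required skew-symmetry relation, and evaluating the product $\prod_{i=0}^{n/2-1} a_i$.

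First I would apply the binomial theorem: $(y-x)^{n-1} = \sum_{i=0}^{n-1} \binom{n-1}{i} (-x)^i y^{n-1-i} = \sum_{i=0}^{n-1} (-1)^i \binom{n-1}{i} x^i y^{n-1-i}$. Hence $a_i = (-1)^i \binom{n-1}{i}$. I would then verify the skew-symmetry hypothesis: since $\binom{n-1}{n-1-i} = \binom{n-1}{i}$ and $(-1)^{n-1-i} = (-1)^{n-1}(-1)^{-i} = -(-1)^i$ using that $n$ is even (so $n-1$ is odd), we get $a_{n-1-i} = (-1)^{n-1-i}\binom{n-1}{n-1-i} = -(-1)^i \binom{n-1}{i} = -a_i$, as required by Corollary~\ref{corollary}.

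Next I would substitute into the formula of Corollary~\ref{corollary}, giving $\Pf = \left(\prod_{i=0}^{n/2-1} (-1)^i \binom{n-1}{i}\right) \cdot \prod_{1\le i<j\le n}(x_j - x_i)$. The remaining step is to collect the sign: $\prod_{i=0}^{n/2-1}(-1)^i = (-1)^{0+1+\cdots+(n/2-1)} = (-1)^{\binom{n/2}{2}}$, while the product of the binomial coefficients is exactly $\prod_{i=0}^{n/2-1}\binom{n-1}{i}$. Pulling the sign out front yields precisely the claimed expression.

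There is essentially no obstacle here; the only points requiring a moment's care are the parity computation $(-1)^{n-1-i} = -(-1)^i$ (which relies on $n$ being even, an ambient hypothesis throughout the paper) and the triangular-number identity $\sum_{i=0}^{n/2-1} i = \binom{n/2}{2}$. Everything else is a direct substitution into Corollary~\ref{corollary}, which has already been established as a special case of the main theorem.
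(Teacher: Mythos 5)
Your proposal is correct and follows exactly the paper's route: the paper's entire proof is the remark that $a_i = (-1)^i\binom{n-1}{i}$, after which one substitutes into Corollary~\ref{corollary} and collects the sign via $\sum_{i=0}^{n/2-1} i = \binom{n/2}{2}$. You have simply written out the details (binomial expansion, the skew-symmetry check using that $n-1$ is odd, and the sign collection) that the paper leaves implicit.
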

It is enough to observe that
$a_{i} = (-1)^{i} \cdot \binom{n-1}{i}$.

\section{Concluding remarks}

Benjamin and Dresden~\cite{Benjamin_Dresden}
gave a different combinatorial proof of the Vandermonde
identity than Gessel~\cite{Gessel}.
They gave a combinatorial interpretation of
both sides and then used a sign reversing involution
on the opposite side than Gessel. Is it possible to
prove Corollary~\ref{corollary}
or more generally, Theorem~\ref{main_theorem}
by a similar technique?

Which other identities does the hyperpfaffian satisfy?
See
Knuth~\cite{Knuth}
and 
Tanner~\cite{Lloyd_Tanner}
for the expansion for
products of two overlapping Pfaffians, and for applications
of this identity. Can any of these results be generalized for
hyperpfaffians? One such example is
the following identity for compositions of the hyperpfaffians, 
proved by Luque and Thibon~\cite{Luque_Thibon}.
\begin{theorem}
Let $k$, $n$ and $p$ be three even positive integers
such that $n$ is a multiple of $k$ and $p$ is a multiple of $n$.
Let $f$ be a skew-symmetric $k$-ary function on the set $[p]$.
Define an $n$-ary function~$g$ by the hyperpfaffian of order $n$,
that is,
$$ g(i_{i}, \ldots, i_{n})
   =
     \Pf(f)_{(i_{1}, \ldots, i_{n})}  .  $$
Then the hyperpfaffian of order $p$ of the function $g$ is given by
a constant times the hyperpfaffian of $f$ of order~$p$, that is,
$$   \Pf(g)
    =
       \frac{1}{(p/n)!} \cdot \binom{p/k}{n/k, \ldots, n/k} \cdot \Pf(f)   ,  $$
where there are $p/n$ instances of $n/k$ in the multinomial coefficient.
\end{theorem}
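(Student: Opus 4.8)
The plan is to argue entirely inside the exterior algebra, using Proposition~\ref{proposition_Pfaffian} three times. Work in the exterior algebra $\Lambda$ on variables $t_{1}, \ldots, t_{p}$ and set $F = \sum_{S} f(S) \cdot t_{S}$, where the sum is over all $k$-element subsets $S$ of $[p]$. Since $k$ is even, $F$ is a central element of $\Lambda$, so all of its powers are well-defined. Proposition~\ref{proposition_Pfaffian} applied to $f$ with ground set $[p]$ reads $F^{p/k} = (p/k)! \cdot \Pf(f) \cdot t_{[p]}$. Note also that $g$ is skew-symmetric: permuting its arguments amounts to relabelling the ground set of the order-$n$ hyperpfaffian of $f$, which by Lemma~\ref{lemma_switch} introduces exactly the sign of that permutation; hence $\Pf(g)$ is defined and Proposition~\ref{proposition_Pfaffian} applies to $g$ as well.

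The key step is the identity $F^{n/k} = (n/k)! \cdot G$, where $G = \sum_{T} g(T) \cdot t_{T}$ with the sum over all $n$-element subsets $T$ of $[p]$. To prove it, expand $F^{n/k}$ by the distributive law into a sum over ordered tuples $(S_{1}, \ldots, S_{n/k})$ of $k$-element subsets; a summand is nonzero only when the $S_{i}$ are pairwise disjoint, and then $T = S_{1} \cup \cdots \cup S_{n/k}$ has exactly $n$ elements. Collecting summands according to this union $T$, the corresponding part of $F^{n/k}$ is precisely $\bigl(\sum_{S \subseteq T,\, |S| = k} f(S) \cdot t_{S}\bigr)^{n/k}$. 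Writing $T = \{i_{1} < \cdots < i_{n}\}$ and identifying $T$ with $[n]$ through the unique increasing bijection $\psi \colon [n] \to T$ induces an embedding of the exterior algebra on generators $u_{1}, \ldots, u_{n}$ into $\Lambda$ via $u_{a} \mapsto t_{\psi(a)}$; since $\psi$ is increasing this sends $u_{S'}$ to $t_{\psi(S')}$ for every subset $S'$, with no stray sign. Under this embedding the $T$-part of $F^{n/k}$ is the image of the left-hand side of Proposition~\ref{proposition_Pfaffian} for the restriction of $f$ to $T$, so that proposition identifies the $T$-part with $(n/k)! \cdot g(T) \cdot t_{T}$ by definition of $g$. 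Summing over all $T$ yields $F^{n/k} = (n/k)! \cdot G$.

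To conclude, raise $G = \tfrac{1}{(n/k)!} \cdot F^{n/k}$ to the power $p/n$; since $(n/k) \cdot (p/n) = p/k$, this gives $G^{p/n} = \bigl((n/k)!\bigr)^{-p/n} \cdot F^{p/k}$. On one hand, Proposition~\ref{proposition_Pfaffian} for $g$ gives $G^{p/n} = (p/n)! \cdot \Pf(g) \cdot t_{[p]}$; on the other hand, $F^{p/k} = (p/k)! \cdot \Pf(f) \cdot t_{[p]}$ from the first paragraph. Comparing coefficients of the top-degree element $t_{[p]}$,
$$ \Pf(g) = \frac{(p/k)!}{(p/n)! \cdot \bigl((n/k)!\bigr)^{p/n}} \cdot \Pf(f) = \frac{1}{(p/n)!} \cdot \binom{p/k}{n/k, \ldots, n/k} \cdot \Pf(f), $$
because the multinomial coefficient with $p/n$ copies of $n/k$ equals $(p/k)! / \bigl((n/k)!\bigr)^{p/n}$.

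The main obstacle is the bookkeeping in the key step: one must verify that restricting $f$ to an $n$-element subset $T$ and relabelling by the increasing bijection $\psi$ is fully compatible with the products in $\Lambda$, so that no spurious signs appear and the "$T$-part" of $F^{n/k}$ genuinely is the order-$n$ hyperpfaffian of the restriction. Once this compatibility is in place, the rest is just three applications of Proposition~\ref{proposition_Pfaffian} together with the arithmetic identity $(n/k)(p/n) = p/k$.
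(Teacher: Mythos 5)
Your argument is correct. The paper itself gives no proof of this theorem---it appears in the concluding remarks and is simply attributed to Luque and Thibon---so there is no internal proof to compare against; what you have written is a clean, self-contained derivation from the paper's own machinery. The three ingredients all check out: (i) $F=\sum_S f(S)\cdot t_S$ is central because $k$ is even, and Proposition~\ref{proposition_Pfaffian} gives $F^{p/k}=(p/k)!\cdot\Pf(f)\cdot t_{[p]}$; (ii) the grouping of the expansion of $F^{n/k}$ by the union $T=S_1\cup\cdots\cup S_{n/k}$ is legitimate, since a nonzero summand forces the $S_i$ to be pairwise disjoint and hence $|T|=n$, and conversely every nonzero summand of $\bigl(\sum_{S\subseteq T}f(S)\cdot t_S\bigr)^{n/k}$ has union exactly $T$ by cardinality, so the $T$-parts really do partition the nonzero terms; (iii) the increasing bijection $\psi\colon[n]\to T$ induces an algebra embedding sending $u_{S'}$ to $t_{\psi(S')}$ with no sign, so Proposition~\ref{proposition_Pfaffian} applied to $f\circ\psi$ identifies the $T$-part with $(n/k)!\cdot g(T)\cdot t_T$. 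Your appeal to Lemma~\ref{lemma_switch} for the skew-symmetry of $g$ is also the right justification, and the final count $(p/k)!/\bigl((n/k)!\bigr)^{p/n}=\binom{p/k}{n/k,\ldots,n/k}$ is correct. This is essentially the shuffle/exterior-algebra proof of Luque and Thibon, and it is the natural one given that the paper already records the exterior-algebra characterization of the hyperpfaffian; the only thing I would add is an explicit remark that $G$ is itself central (as $n$ is even), so that $G^{p/n}$ is unambiguous before you apply Proposition~\ref{proposition_Pfaffian} to $g$.
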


\section*{Acknowledgments}

The authors are grateful to the referees for
their comments on an earlier version of this note.
The first author was partially supported by
National Science Foundation grant DMS~0902063.

\newcommand{\journal}[6]{{#1,} #2, {\it #3} {\bf #4} (#5) #6.}
\newcommand{\thesis}[4]{{#1,} #2, {\it #3,} (#4).}

\bigskip

{\em R.\ Ehrenborg, N.\ B.\ Fox.
Department of Mathematics,
University of Kentucky,
Lexington, KY 40506-0027,}
{\tt jrge@ms.uky.edu},
{\tt norman.fox@uky.edu}

\end{document}